\newtheorem{theorem}{Theorem}[section]
\newtheorem{lemma}[theorem]{Lemma}
\newtheorem{prop}[theorem]{Proposition}
\theoremstyle{definition}
\theoremstyle{remark}
\numberwithin{equation}{section}
\let \la=\lambda
\let \e=\varepsilon
\let \d=\delta
\let \o=\omega
\let \a=\alpha
\let \O=\Omega
\begin{document}
\title[$A_2$ conjecture]
{A simple proof of the $A_2$ conjecture}

\author{Andrei K. Lerner}
\address{Department of Mathematics,
Bar-Ilan University, 52900 Ramat Gan, Israel}
\email{aklerner@netvision.net.il}

\begin{abstract}
We give a simple proof of the $A_2$ conjecture proved recently by T. Hyt\"onen. Our proof avoids completely
the notion of the Haar shift operator, and it is based only on the ``local mean oscillation decomposition".
Also our proof yields a simple proof of the ``two-weight conjecture" as well.
\end{abstract}

\keywords{Calder\'on-Zygmund operator, Haar shift operator, local mean oscillation decomposition, $A_2$ conjecture.}

\subjclass[2010]{42B20,42B25}

\maketitle

\section{Introduction}

Let $T$ be an $L^2$ bounded Calder\'on-Zygmund operator. We say that $w\in A_2$ if
$$\|w\|_{A_2}=\sup_{Q\subset {\mathbb R}^n}w(Q)w^{-1}(Q)/|Q|^2<\infty.$$
In this note we give a rather simple proof of the $A_2$ conjecture recently settled
by T. Hyt\"onen \cite{H}.

\begin{theorem}\label{a2} For any $w\in A_2$,
\begin{equation}\label{lin}
\|T\|_{L^2(w)}\le c(n,T)\|w\|_{A_2}.
\end{equation}
\end{theorem}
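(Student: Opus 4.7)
\medskip
\noindent\textbf{Proof plan.} My plan is to avoid Haar shifts entirely by deriving a pointwise domination of $|Tf|$ by a \emph{positive sparse operator}, and then proving the $A_2$ estimate for such operators directly.

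\smallskip
The starting point is the local mean oscillation decomposition: for any measurable $g$ and any cube $Q_0$, there is a sparse family $\mathcal{S}$ of dyadic subcubes of $Q_0$ such that
$$|g(x)-m_{Q_0}(g)|\le C_n\sum_{Q\in\mathcal{S}}\o_\la(g;Q)\chi_Q(x)\qquad\text{a.e.\ on }Q_0,$$
where $\o_\la(g;Q)$ is the local mean oscillation at level $\la$ and $m_{Q_0}(g)$ is a median. I would apply this to $g=Tf$ and then estimate $\o_\la(Tf;Q)$ using the Calder\'on--Zygmund structure of $T$: splitting $f=f\chi_{2Q}+f\chi_{(2Q)^c}$, the weak $(1,1)$ bound for $T$ handles the local part (through its median) and the standard H\"older kernel estimate handles the tail, giving
$$\o_\la(Tf;Q)\le c(n,T)\sum_{k\ge 0}2^{-k\d}\frac{1}{|2^{k+1}Q|}\int_{2^{k+1}Q}|f|.$$
Inserting this into the decomposition, collapsing the geometric tail, and unifying over a finite family of shifted dyadic grids leads, after letting $Q_0$ exhaust $\mathbb{R}^n$, to a pointwise bound
$$|Tf(x)|\le c(n,T)\sum_{Q\in\wid{\mathcal{S}}}\langle |f|\rangle_Q\chi_Q(x)$$
for some sparse family $\wid{\mathcal{S}}$.

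\smallskip
It then remains to prove that $\|\mathcal{A}_\mathcal{S}\|_{L^2(w)}\le c_n\|w\|_{A_2}$ for the positive sparse operator $\mathcal{A}_\mathcal{S}f=\sum_{Q\in\mathcal{S}}\langle |f|\rangle_Q\chi_Q$. By duality and sparseness, each $Q\in\mathcal{S}$ is replaced by a pairwise disjoint major subset $E_Q\subset Q$ with $|E_Q|\ge\tfrac{1}{2}|Q|$; the resulting bilinear form is then estimated by a short direct computation using only the definition of $A_2$ (this is essentially the Sawyer-type two-weight testing argument). The main obstacle, to my mind, is the oscillation estimate above: one must extract the correct geometric decay in $k$ and absorb the weak-type $(1,1)$ behavior of $T$ into a clean bound by averages of $|f|$, with constants depending only on $n$ and the standard CZ constants of $T$. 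The decomposition of the first step is a known tool of the author, and the sparse $A_2$ bound admits a short self-contained proof.
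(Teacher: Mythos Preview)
Your plan has a genuine gap at the step where you write ``collapsing the geometric tail, and unifying over a finite family of shifted dyadic grids leads \ldots\ to a pointwise bound $|Tf(x)|\le c(n,T)\sum_{Q\in\wid{\mathcal S}}\langle|f|\rangle_Q\chi_Q(x)$ for some sparse family $\wid{\mathcal S}$.'' After inserting the oscillation estimate into the decomposition you obtain, up to $Mf$, the sum $\sum_{m\ge 0}2^{-m\d}{\mathcal T}_{{\mathcal S},m}f$, where ${\mathcal T}_{{\mathcal S},m}f(x)=\sum_{Q\in{\mathcal S}}\langle f\rangle_{2^mQ}\chi_Q(x)$. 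The cubes $2^mQ$ are not dyadic; replacing each $2^mQ$ by a containing cube $Q'\in{\mathscr D}_\a$ of comparable size still leaves you with an operator of the form $\sum_{Q\in{\mathcal S}}\langle f\rangle_{Q'}\chi_Q$, in which the averaging cube and the support cube differ by $m$ generations. Nothing in your outline produces a \emph{sparse} family for which the averaging and indicator cubes coincide, and the geometric factor $2^{-m\d}$ does not by itself turn ${\mathcal T}_{{\mathcal S},m}$ into ${\mathcal A}_{{\mathscr D},{\mathcal S}}$. This is precisely the nontrivial step, not the oscillation bound (which, as you note, is classical).

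The paper does \emph{not} obtain a pointwise sparse domination at this stage. Instead it passes to the adjoint ${\mathcal A}_{m,\a}^{\star}$, proves for it a weak-type $(1,1)$ bound with constant $c(n)m$ via a Calder\'on--Zygmund decomposition, deduces the local oscillation estimate $\o_{\la_n}({\mathcal A}_{m,\a}^{\star}f;Q)\le c(n)m\,f_Q$, and then applies the local mean oscillation decomposition a \emph{second} time to ${\mathcal A}_{m,\a}^{\star}$. Duality then yields the norm inequality $\|{\mathcal A}_{m,\a}f\|_X\le c(n)m\sup_{{\mathscr D},{\mathcal S}}\|{\mathcal A}_{{\mathscr D},{\mathcal S}}f\|_X$, and the factor $m$ is absorbed by $2^{-m\d}$. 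Your proposal skips this entire mechanism; as written, the passage from ${\mathcal T}_{{\mathcal S},m}$ to a genuine sparse operator is unjustified.
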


Below is a partial list of important contributions to this result. First, (\ref{lin}) was proved for the
following operators:

\begin{list}{\labelitemi}{\leftmargin=1em}
\item Hardy-Littlewood maximal operator (S. Buckley \cite{B}, 1993);
\item Beurling transform (S. Petermichl and A. Volberg \cite{PV}, 2002);
\item Hilbert transform (S. Petermichl \cite{P1}, 2007);
\item Riesz transform (S. Petermichl \cite{P2}, 2008);
\item dyadic paraproduct (O. Beznosova \cite{Be}, 2008);
\item Haar shift (M. Lacey, S. Petermichl and M. Reguera \cite{LPR}, 2010).
\end{list}

After that, the following works appeared with very small intervals:

\begin{list}{\labelitemi}{\leftmargin=1em}
\item a simplified proof for Haar shifts (D. Cruz-Uribe, J. Martell and C.~P\'erez \cite{CMP1,CMP2}, 2010);
\item the $L^2(w)$ bound for general $T$ by $\|w\|_{A_2}\log(1+\|w\|_{A_2})$
(C. P\'erez, S. Treil and A. Volberg \cite{PTV}, 2010);
\item (\ref{lin}) in full generality (T. Hyt\"onen \cite{H}, 2010);
\item a simplification of the proof (T. Hyt\"onen et al. \cite{HPTV}, 2010);
\item (\ref{lin}) for the maximal Calder\'on-Zygmund operator $T_{\natural}$ (T. Hyt\"onen et al. \cite{HLMORSU}, 2010).
\end{list}

The ``Bellman function" proof of the $A_2$ conjecture in a geometrically doubling metric space
was given by F.~Nazarov, A. Reznikov and A.~Volberg \cite{NRV} (see also \cite{NV}). 

All currently known proofs of (\ref{a2}) were based on the
representation of $T$ in terms of the Haar shift operators ${\mathbb S}_{{\mathscr{D}}}^{m,k}$. Such representations also have a long history; for general $T$ it
was found in \cite{H}. The second key element of all known proofs was showing (\ref{a2}) for
${\mathbb S}_{{\mathscr{D}}}^{m,k}$ in place of $T$ with the corresponding constant depending linearly (or polynomially) on the complexity.
Observe that over the past year several different proofs of this step appeared (see, e.g., \cite{La,T}).

In a very recent work \cite{L2}, we have proved that for any Banach function space $X({\mathbb R}^n)$,
\begin{equation}\label{ineq}
\|T_{\natural}f\|_{X}\le c(T,n)\sup_{{\mathscr{D}},{\mathcal S}}\|{\mathcal A}_{{\mathscr{D}},{\mathcal S}}|f|\|_{X},
\end{equation}
where
$${\mathcal A}_{{\mathscr{D}},{\mathcal S}}f(x)=\sum_{j,k}f_{Q_j^k}\chi_{Q_j^k}(x)$$
(this operator is defined by means of a sparse family ${\mathcal S}=\{Q_j^k\}$ from a general dyadic grid ${\mathscr{D}}$; for these notions see
Section 2 below).

Observe that for the operator ${\mathcal A}_{{\mathscr{D}},{\mathcal S}}f$ inequality (\ref{lin}) follows just in few lines by a very simple
argument. This was first observed in \cite{CMP1,CMP2} (see also \cite{L2}).
Hence, in the case when $X=L^2(w)$, inequality (\ref{ineq}) easily implies the $A_2$ conjecture.
Also, (\ref{ineq}) yields the ``two-weight conjecture" by D. Cruz-Uribe and C. P\'erez; we refer to \cite{L2} for the details.

The proof of (\ref{ineq}) in \cite{L2} still depended on the
representation of $T$ in terms of the Haar shift operators. In this note we will show that this difficult step can be completely avoided.
Our new proof of (\ref{ineq}) is based only on the ``local mean oscillation decomposition" proved by the author in \cite{L1}. It is interesting
that we apply this decomposition twice. First it is applied directly to $T_{\natural}$, and we obtain that $T_{\natural}$ is essentially pointwise
dominated by the maximal operator $M$ and a series of dyadic type operators ${\mathcal T}_m$. In order to handle ${\mathcal T}_m$, we apply the decomposition again
to the adjoint operators ${\mathcal T}^{\star}_m$. After this step we obtain a pointwise domination by the simplest dyadic operators
${\mathcal A}_{{\mathscr{D}},{\mathcal S}}$.

Note that all our estimates are actually pointwise, and they do not depend on a particular function space. This explains why we prefer to write (\ref{ineq})
with a general Banach function space $X$.

\section{Preliminaries}
\subsection{Calder\'on-Zygmund operators}

By a Calder\'on-Zygmund operator in ${\mathbb R}^n$ we mean an $L^2$ bounded integral operator represented as
$$Tf(x)=\int_{{\mathbb R}^n}K(x,y)f(y)dy,\quad x\not\in\text{supp}\,f,$$
with kernel $K$ satisfying the following growth and smoothness conditions:
\begin{enumerate}
\renewcommand{\labelenumi}{(\roman{enumi})}
\item
$|K(x,y)|\le \frac{c}{|x-y|^n}$ for all $x\not=y$;
\item
there exists $0<\d\le 1$ such that
$$|K(x,y)-K(x',y)|+|K(y,x)-K(y,x')|\le
c\frac{|x-x'|^{\d}}{|x-y|^{n+\d}},$$ whenever $|x-x'|<|x-y|/2$.
\end{enumerate}

Given a Calder\'on-Zygmund operator $T$, define its maximal truncated version by
$$T_{\natural}f(x)=\sup_{0<\e<\nu}\Big|\int_{\e<|y|<\nu}K(x,y)f(y)dy\Big|.$$

\subsection{Dyadic grids}

Recall that the standard dyadic grid in ${\mathbb R}^n$ consists of the cubes
$$2^{-k}([0,1)^n+j),\quad k\in{\mathbb Z}, j\in{\mathbb Z}^n.$$
Denote the standard grid by ${\mathcal D}$.

By a {\it general dyadic grid} ${\mathscr{D}}$ we mean a collection of
cubes with the following properties: (i)
for any $Q\in {\mathscr{D}}$ its sidelength $\ell_Q$ is of the form
$2^k, k\in {\mathbb Z}$; (ii) $Q\cap R\in\{Q,R,\emptyset\}$ for any $Q,R\in {\mathscr{D}}$;
(iii) the cubes of a fixed sidelength $2^k$ form a partition of ${\mathbb
R}^n$.

Given a cube $Q_0$, denote by ${\mathcal D}(Q_0)$ the set of all
dyadic cubes with respect to $Q_0$, that is, the cubes from ${\mathcal D}(Q_0)$ are formed
by repeated subdivision of $Q_0$ and each of its descendants into $2^n$ congruent subcubes.
Observe that if $Q_0\in {\mathscr{D}}$, then each cube from ${\mathcal D}(Q_0)$ will also
belong to ${\mathscr{D}}$.

A well known principle says that there are $\xi_n$ general dyadic grids ${\mathscr{D}}_{\a}$ such that every cube $Q\subset {\mathbb R}^n$
is contained in some cube $Q'\in {\mathscr{D}}_{\a}$ such that $|Q'|\le c_n|Q|$. For $\xi_n=3^n$ this is attributed in the literature to M.~Christ and,
independently, to J. Garnett and P. Jones. For $\xi_n=2^n$ it can be found in a recent work by T. Hyt\"onen and C. P\'erez \cite{HP}. Very recently it was shown
by J. Conde et al. \cite{CGCP} that one can take $\xi_n=n+1$, and this number is optimal. For our purposes any of such variants is suitable. We will use the one from
\cite{HP}.

\begin{prop}\label{prhp} There are $2^n$ dyadic grids ${\mathscr{D}}_{\a}$ such that for any cube $Q\subset {\mathbb R}^n$ there exists a cube $Q_{\a}\in {\mathscr{D}}_{\a}$
such that $Q\subset Q_{\a}$ and $\ell_{Q_{\a}}\le 6\ell_Q$.
\end{prop}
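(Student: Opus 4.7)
The construction proceeds by tensoring a one-dimensional construction. In dimension one I would build two grids, $\mathscr{D}_0$ and $\mathscr{D}_1$: let $\mathscr{D}_0$ be the standard dyadic grid, and let $\mathscr{D}_1$ be obtained by shifting the intervals of length $2^k$ by $(-1)^k 2^k/3$, so that $\mathscr{D}_1$ consists of intervals of the form $[j2^k + (-1)^k 2^k/3,\, (j+1)2^k + (-1)^k 2^k/3)$. The first thing to check is that $\mathscr{D}_1$ really is a dyadic grid, i.e.\ that the shifts at successive scales are compatible with the nesting property~(ii). This amounts to verifying $(-1)^k 2^k/3 - (-1)^{k-1} 2^{k-1}/3 \equiv 0 \pmod{2^{k-1}}$, which is immediate once one notes the left side equals $(-1)^{k-1} 2^{k-1}$.

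Next, for any interval $I$ of length $\ell$, I would choose the unique integer $k$ so that $2^k \in [3\ell, 6\ell)$. At this scale, the endpoints of the intervals in $\mathscr{D}_0$ and $\mathscr{D}_1$ interleave, and consecutive endpoints (taken across both grids) are separated by either $2^k/3$ or $2\cdot 2^k/3$, hence by at least $\ell$. Therefore $I$ cannot meet an endpoint of both grids at scale $k$, and must lie entirely inside some cell of either $\mathscr{D}_0$ or $\mathscr{D}_1$. That cell has length $2^k \le 6\ell$.

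For an arbitrary cube $Q \subset \mathbb{R}^n$ of sidelength $\ell_Q$, I define $\mathscr{D}_\beta = \mathscr{D}_{\beta_1} \times \cdots \times \mathscr{D}_{\beta_n}$ for each $\beta \in \{0,1\}^n$; this gives $2^n$ honest dyadic grids on $\mathbb{R}^n$ (properties (i)--(iii) pass through the product). Projecting $Q$ onto each coordinate axis yields an interval $I_i$ of length $\ell_Q$. Applying the one-dimensional result at the common scale $k$ with $2^k \in [3\ell_Q, 6\ell_Q)$, one picks $\beta_i \in \{0,1\}$ so that $I_i$ sits inside a cell of $\mathscr{D}_{\beta_i}$ of length $2^k$. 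The product of these cells is a cube in $\mathscr{D}_\beta$ of sidelength $2^k \le 6\ell_Q$ containing $Q$, as required.

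The only genuinely delicate point is the nesting check for $\mathscr{D}_1$; the reason the shift $(-1)^k 2^k/3$ (rather than the naive $2^k/3$) appears is precisely to make this compatibility work, since a constant multiple of $2^k$ would fail while the alternating sign causes the telescoping identity above. Once the grid $\mathscr{D}_1$ is in place, the containment property is a short interval-geometry observation, and the passage to $n$ dimensions is immediate by taking products.
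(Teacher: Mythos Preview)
Your argument is correct and follows essentially the same route as the paper: reduce to dimension one, use the standard grid together with a $1/3$-shifted grid, pick the scale $2^k$ with $3\ell\le 2^k<6\ell$, observe that the two families of endpoints interleave with minimum gap $2^k/3\ge\ell$ so that $I$ misses the endpoints of at least one grid, and then take products for~$\mathbb{R}^n$. The one refinement you add is the alternating shift $(-1)^k 2^k/3$ (your telescoping difference actually equals $(-1)^k 2^{k-1}$ rather than $(-1)^{k-1}2^{k-1}$, but either way it lies in $2^{k-1}\mathbb{Z}$), which makes $\mathscr{D}_1$ a genuinely nested dyadic grid; the paper instead writes $\mathscr{D}_\alpha=\{2^{-k}([0,1)^n+j+\alpha)\}$ with the fixed shift $\alpha\in\{0,1/3\}^n$, the customary shorthand that yields the same covering conclusion but, as you implicitly note, does not literally satisfy the nesting axiom~(ii).
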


The grids ${\mathscr{D}}_{\a}$ here are the following:
$${\mathscr{D}}_{\a}=\{2^{-k}([0,1)^n+j+\a)\},\quad\a\in\{0,1/3\}^n.$$

We outline briefly the proof. First, it is easy to see that it suffices to consider the one-dimensional case. Take an arbitrary interval
$I\subset {\mathbb R}$. Fix $k_0\in{\mathbb Z}$ such that $2^{-k_0-1}\le 3\ell_{I}<2^{-k_0}$. If $I$ does not contain any point $2^{-k_0}j,j\in {\mathbb Z}$,
then $I$ is contained in some $I'=[2^{-k_0}j,2^{-k_0}(j+1))$ (since such intervals form a partition of ${\mathbb R}$), and $\ell_{I'}\le 6\ell_{I}$.
On the other hand, if $I$ contains some point $j_02^{-k_0}$, then $I$ does not contain any point $2^{-k_0}(j+1/3),j\in {\mathbb Z}$ (since $\ell_I<2^{-k_0}/3$), and therefore $I$ is contained in some $I''=[2^{-k_0}(j+1/3),2^{-k_0}(j+4/3))$, and $\ell_{I''}\le 6\ell_{I}$.

\subsection{Local mean oscillations}
Given a measurable function $f$ on ${\mathbb R}^n$ and a cube $Q$,
the local mean oscillation of $f$ on $Q$ is defined by
$$\o_{\la}(f;Q)=\inf_{c\in {\mathbb R}}
\big((f-c)\chi_{Q}\big)^*\big(\la|Q|\big)\quad(0<\la<1),$$
where $f^*$ denotes the non-increasing rearrangement of $f$.

By a median value of $f$ over $Q$ we mean a possibly nonunique, real
number $m_f(Q)$ such that
$$\max\big(|\{x\in Q: f(x)>m_f(Q)\}|,|\{x\in Q: f(x)<m_f(Q)\}|\big)\le |Q|/2.$$

It is easy to see that the set of all median values of $f$ is either one point or the closed interval. In the latter case we will assume for
the definiteness that $m_f(Q)$ is the {\it maximal} median value. Observe that it follows from the definitions that
\begin{equation}\label{pro1}
|m_f(Q)|\le (f\chi_Q)^*(|Q|/2).
\end{equation}

Given a cube $Q_0$, the dyadic local sharp maximal
function $M^{\#,d}_{\la;Q_0}f$ is defined by
$$M^{\#,d}_{\la;Q_0}f(x)=\sup_{x\in Q'\in
{\mathcal D}(Q_0)}\o_{\la}(f;Q').$$

We say that $\{Q_j^k\}$ is a {\it sparse family} of cubes if:
(i)~the cubes $Q_j^k$ are disjoint in $j$, with $k$ fixed;
(ii) if $\Omega_k=\cup_jQ_j^k$, then $\Omega_{k+1}\subset~\Omega_k$;
(iii) $|\Omega_{k+1}\cap Q_j^k|\le \frac{1}{2}|Q_j^k|$.

The following theorem was proved in \cite{L2} (its very similar version can be found in \cite{L1}).

\begin{theorem}\label{decom1} Let $f$ be a measurable function on
${\mathbb R}^n$ and let $Q_0$ be a fixed cube. Then there exists a
(possibly empty) sparse family of cubes $Q_j^k\in {\mathcal D}(Q_0)$
such that for a.e. $x\in Q_0$,
$$
|f(x)-m_f(Q_0)|\le
4M_{\frac{1}{2^{n+2}};Q_0}^{\#,d}f(x)+2\sum_{k,j}
\o_{\frac{1}{2^{n+2}}}(f;Q_j^k)\chi_{Q_j^k}(x).
$$
\end{theorem}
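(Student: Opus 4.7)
The plan is an iterated Calder\'on--Zygmund stopping-time construction inside $Q_0$, driven by a uniform estimate relating $|f-m_f(Q)|$ to $\omega_\lambda(f;Q)$ with $\lambda = 1/2^{n+2}$. The key preliminary bound I would establish is
\begin{equation*}
|\{x\in Q : |f(x)-m_f(Q)|>2\omega_\lambda(f;Q)\}|\le \lambda|Q|
\end{equation*}
for every cube $Q$. To prove it, for each $\varepsilon>0$ pick $c$ with $((f-c)\chi_Q)^*(\lambda|Q|)<\omega_\lambda(f;Q)+\varepsilon$; since $\lambda<1/2$, the set where $|f-c|\le \omega_\lambda(f;Q)+\varepsilon$ has measure $>|Q|/2$, and hence the definition of the median forces $|m_f(Q)-c|\le\omega_\lambda(f;Q)+\varepsilon$, so that the triangle inequality yields the claim as $\varepsilon\to 0$.

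Given this, I would construct the family $\{Q_j^k\}$ by induction. Put $F_0=\{Q_0\}$; for each $Q\in F_{k-1}$ let $E(Q)=\{x\in Q : |f(x)-m_f(Q)|>2\omega_\lambda(f;Q)\}$ (so $|E(Q)|\le \lambda|Q|$) and select the maximal dyadic cubes $P\in \mathcal{D}(Q)$, $P\subsetneq Q$, with $|P\cap E(Q)|>|P|/2^{n+1}$. Let $F_k$ be the union of these selections over all $Q\in F_{k-1}$ and list its members as $\{Q_j^k\}_j$. Calder\'on--Zygmund stopping gives $\sum_{P\in F_k,\, P\subset Q}|P|\le 2^{n+1}|E(Q)|\le |Q|/2$, which is exactly sparsity condition (iii); nesting (ii) is automatic, and by the Lebesgue differentiation theorem almost every $x\in Q\setminus \bigcup_{P\in F_k,\, P\subset Q}P$ satisfies $x\notin E(Q)$, i.e.\ $|f(x)-m_f(Q)|\le 2\omega_\lambda(f;Q)$.

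For the pointwise estimate, fix $x\in Q_0$ outside the null set $\bigcap_k\Omega_k$ and let $Q_0=Q^{(0)}\supsetneq Q^{(1)}\supsetneq \cdots\supsetneq Q^{(m)}$ be the maximal tower of selected cubes containing $x$. Then $|f(x)-m_f(Q^{(m)})|\le 2\omega_\lambda(f;Q^{(m)})$ by the previous step. For each $l\ge 1$, writing $\widehat Q^{(l)}$ for the dyadic parent of $Q^{(l)}$, maximality (when $\widehat Q^{(l)}\subsetneq Q^{(l-1)}$) or the crude bound $|E(Q^{(l-1)})|\le \lambda|Q^{(l-1)}|$ (when $\widehat Q^{(l)}=Q^{(l-1)}$) gives $|\widehat Q^{(l)}\cap E(Q^{(l-1)})|\le |\widehat Q^{(l)}|/2^{n+1}=|Q^{(l)}|/2$, whence
\begin{equation*}
((f-m_f(Q^{(l-1)}))\chi_{Q^{(l)}})^*(|Q^{(l)}|/2)\le 2\omega_\lambda(f;Q^{(l-1)}).
\end{equation*}
Combining this with (\ref{pro1}) applied to $f-m_f(Q^{(l-1)})$ yields $|m_f(Q^{(l)})-m_f(Q^{(l-1)})|\le 2\omega_\lambda(f;Q^{(l-1)})$. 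Telescoping along the tower and using $\omega_\lambda(f;Q_0)\le M^{\#,d}_{\lambda;Q_0}f(x)$ then proves the theorem (in fact with $2$ in place of the stated $4$). The main technical obstacle is calibrating the selection density $1/2^{n+1}$ against both the $2^n$ volume ratio between a cube and its dyadic parent and the measure estimate $|E(Q)|\le \lambda|Q|$; the choice $\lambda=1/2^{n+2}$ is precisely what makes these three quantities compatible in the inductive step.
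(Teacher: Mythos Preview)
The paper does not prove Theorem~\ref{decom1}; it is quoted from \cite{L2} (a close variant appears in \cite{L1}). Your argument is correct and is essentially the iterated stopping-time proof given in those references: the distributional estimate $|\{x\in Q:|f(x)-m_f(Q)|>2\omega_\lambda(f;Q)\}|\le\lambda|Q|$, the Calder\'on--Zygmund selection at density $2^{-(n+1)}$ inside each previously selected cube, the Lebesgue-differentiation step, and the telescoping via the parent-cube maximality bound are all carried out correctly, and your remark that the first constant may be taken as $2$ rather than $4$ is also right.
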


The following proposition is well known, and it can be found in a slightly different form in \cite{JT}. We give its
proof here for the sake of the completeness. The proof is a classical argument used, for example, to show that $T$ is bounded
from $L^{\infty}$ to $BMO$. Also the same argument is used to prove a good-$\la$ inequality relating $T$ and $M$.

\begin{prop}\label{oscsin} For any cube $Q\subset {\mathbb R}^n$,
\begin{equation}\label{loc1}
\o_{\la}(Tf;Q)\le c(T,\la,n)\sum_{m=0}^{\infty}\frac{1}{2^{m\d}}\left(\frac{1}{|2^mQ|}\int_{2^mQ}|f(y)|dy\right)
\end{equation}
and
\begin{equation}\label{loc2}
\o_{\la}(T_{\natural}f;Q)\le c(T,\la,n)\sum_{m=0}^{\infty}\frac{1}{2^{m\d}}\left(\frac{1}{|2^mQ|}\int_{2^mQ}|f(y)|dy\right).
\end{equation}
\end{prop}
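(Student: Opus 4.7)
\medskip
\noindent\textbf{Proof plan.} The plan is to run the classical ``local vs.\ far'' decomposition with respect to $2Q$ and exploit a good choice of the infimum constant in $\omega_\lambda$. Fix $Q$ with center $x_Q$ and write $f=f_1+f_2$, where $f_1=f\chi_{2Q}$ and $f_2=f\chi_{(2Q)^c}$. For \eqref{loc1} I would simply plug the specific constant $c=Tf_2(x_Q)$ into the definition of $\omega_\lambda(Tf;Q)$, which gives the decomposition
$$(Tf-c)\chi_Q=(Tf_1)\chi_Q+\bigl(Tf_2-Tf_2(x_Q)\bigr)\chi_Q.$$

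For the first summand I would invoke the weak-$(1,1)$ boundedness of $T$, a standard consequence of conditions (i)--(ii), to get $((Tf_1)\chi_Q)^*(\lambda|Q|)\le c_T(\lambda|Q|)^{-1}\|f_1\|_1$, which is exactly the $m=0$ term of the right-hand side. For the second summand I would use the Hölder-smoothness hypothesis (ii): for $x\in Q$ and $y\notin 2Q$ one has $|x_Q-y|\simeq|x-y|$ and $|x-x_Q|\le\ell_Q/2\le|x_Q-y|/2$, so
$$|Tf_2(x)-Tf_2(x_Q)|\le c\int_{(2Q)^c}\frac{\ell_Q^{\delta}}{|x_Q-y|^{n+\delta}}|f(y)|\,dy,$$
and splitting $(2Q)^c$ into the dyadic shells $2^{m+1}Q\setminus 2^mQ$ ($m\ge 1$) produces exactly the $m\ge 1$ portion of the right-hand side, pointwise on $Q$. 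Combining via the inequality $(g+h)^*(t)\le g^*(t)+\|h\|_\infty$ yields \eqref{loc1}.

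For \eqref{loc2} the same strategy works verbatim, but with $c:=T_{\natural}f_2(x_Q)$ (well defined because $f_2$ vanishes near $x_Q$, so the truncation parameters are irrelevant for the small $\varepsilon$ regime). Sublinearity of $T_{\natural}$ gives
$$|T_{\natural}f(x)-T_{\natural}f_2(x_Q)|\le T_{\natural}f_1(x)+|T_{\natural}f_2(x)-T_{\natural}f_2(x_Q)|,$$
and the first term is handled by the weak-$(1,1)$ bound for $T_{\natural}$ (classical, via Cotlar's inequality), producing the $m=0$ contribution. The second term is treated as before, uniformly in the truncation parameters $\varepsilon<\nu$: for each such pair, one compares $T_{\varepsilon,\nu}f_2(x)$ with $T_{\varepsilon,\nu}f_2(x_Q)$ by splitting the integrand into (a) the difference of kernels on the common part of the two annuli $\{\varepsilon<|x-y|<\nu\}$ and $\{\varepsilon<|x_Q-y|<\nu\}$, which is controlled by (ii) and yields the $\sum_{m\ge 1}2^{-m\delta}$ tail, and (b) the symmetric difference of these annuli, which for $y\notin 2Q$ is contained in thin shells of width $\lesssim\ell_Q$ at radii $\simeq\varepsilon$ or $\simeq\nu$, and is thus absorbed into the same geometric series after using (i).

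The only point requiring genuine care is that last symmetric-difference estimate in the $T_{\natural}$ case: one must ensure the error is uniform in $\varepsilon,\nu$ before taking the supremum. Everything else is a direct application of the weak-$(1,1)$ inequality (for $T$ and for $T_{\natural}$) together with the kernel regularity, and this is precisely the argument used to prove the $L^\infty\!\to\! BMO$ and good-$\lambda$ estimates referred to in the statement.
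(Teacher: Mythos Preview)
Your approach is essentially identical to the paper's: split $f=f_1+f_2$ at a fixed dilate of $Q$, choose $c=Tf_2(x_Q)$ (resp.\ $T_\natural f_2(x_Q)$), control the local piece by the weak-$(1,1)$ inequality and the far piece by the kernel smoothness summed over dyadic annuli. Two small remarks: the paper cuts at $2\sqrt n\,Q$ rather than $2Q$, which is needed since in ${\mathbb R}^n$ one only has $|x-x_Q|\le \sqrt n\,\ell_Q/2$ (your inequality $|x-x_Q|\le\ell_Q/2$ fails for $n\ge 2$); and for $T_\natural$ the paper is terser than you, simply asserting that ``the same inequalities hold'' via sublinearity, without isolating the annulus symmetric-difference term you describe.
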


\begin{proof} Let $f_1=f\chi_{2\sqrt n Q}$ and $f_2=f-f_1$. If $x\in Q$ and $x_0$ is the center of $Q$, then
by the kernel assumptions,
\begin{eqnarray*}
&&|T(f_2)(x)-T(f_2)(x_0)|\le \int_{{\mathbb R}^n\setminus 2\sqrt n Q}|f(y)||K(x,y)-K(x_0,y)|dy\\
&&\le c\ell_Q^{\d}\int\limits_{{\mathbb R}^n\setminus 2Q}\frac{|f(y)|}{|x-y|^{n+\d}}dy\le c\ell_Q^{\d}
\sum_{m=0}^{\infty}\frac{1}{(2^m\ell_Q)^{n+\d}}\int\limits_{2^{m+1}Q\setminus 2^mQ}|f(y)|dy\\
&&\le c\sum_{m=0}^{\infty}\frac{1}{2^{m\d}}\left(\frac{1}{|2^mQ|}\int_{2^mQ}|f(y)|dy\right).
\end{eqnarray*}
From this and from the weak type $(1,1)$ of $T$,
\begin{eqnarray*}
&&\big(\big(Tf-T(f_2)(x_0)\big)\chi_Q\big)^*\big(\la|Q|\big)\\
&&\le (T(f_1))^*(\la|Q|)+\|T(f_2)-T(f_2)(x_0)\|_{L^{\infty}(Q)}\\
&&\le c\frac{1}{|Q|}\int_{2\sqrt n|Q|}|f(y)|dy+c\sum_{m=0}^{\infty}\frac{1}{2^{m\d}}\left(\frac{1}{|2^mQ|}\int_{2^mQ}|f(y)|dy\right)\\
&&\le c'\sum_{m=0}^{\infty}\frac{1}{2^{m\d}}\left(\frac{1}{|2^mQ|}\int_{2^mQ}|f(y)|dy\right),
\end{eqnarray*}
which proves (\ref{loc1}).

The same inequalities hold for $T_{\natural}$ as well, which gives (\ref{loc2}). The only trivial difference in the argument is that one needs to
use the sublinearity of $T_{\natural}$ instead of the linearity of $T$.
\end{proof}

\section{Proof of (\ref{ineq})}
Combining Proposition \ref{oscsin} and Theorem \ref{decom1} with $Q_0\in {\mathcal D}$, we get
that there exists a sparse family $S=\{Q_j^k\}\in {\mathcal D}$ such that for a.e. $x\in Q_0$,
$$|T_{\natural}f(x)-m_{Q_0}(T_{\natural}f)|\le c(n,T)\Big(Mf(x)+\sum_{m=0}^{\infty}\frac{1}{2^{m\d}}{\mathcal T}_{\mathcal S,m}|f|(x)\Big),$$
where $M$ is the Hardy-Littlewood maximal operator and
$${\mathcal T}_{\mathcal S,m}f(x)=\sum_{j,k}f_{2^mQ_j^k}\chi_{Q_j^k}(x).$$

If $f\in L^1$, then it follows from (\ref{pro1}) that $|m_Q(T_{\natural}f)|\to 0$ as $|Q|\to\infty$. Therefore, letting $Q_0$ to anyone of $2^n$ quadrants and
using Fatou's lemma, we get
$$
\|T_{\natural}f\|_{X}\le c(n,T)\Big(\|Mf\|_X+\sum_{m=0}^{\infty}\frac{1}{2^{m\d}}\sup_{{\mathcal S}\in {\mathcal D}}\|{\mathcal T}_{\mathcal S,m}|f|\|_{X}\Big)
$$
(for the notion of the Banach function space $X$ we refer to \cite[Ch. 1]{BS}).

Hence, (\ref{ineq}) will follow from
\begin{equation}\label{in1}
\|Mf\|_{X}\le c(n)\sup_{{\mathscr{D}},{\mathcal S}}\|{\mathcal A}_{{\mathscr{D}},{\mathcal S}}f\|_{X} \quad(f\ge 0)
\end{equation}
and
\begin{equation}\label{in2}
\sup_{{\mathcal S}\in {\mathcal D}}\|{\mathcal T}_{\mathcal S,m}f\|_{X} \le
c(n)m\sup_{{\mathscr{D}},{\mathcal S}}\|{\mathcal A}_{{\mathscr{D}},{\mathcal S}}f\|_{X}  \quad(f\ge 0).
\end{equation}

Inequality (\ref{in1}) was proved in \cite{L2}; we give the proof here for the sake of the completeness.
The proof is just a combination of Proposition \ref{prhp} and the Calder\'on-Zygmund decomposition. First, by Proposition \ref{prhp},
\begin{equation}\label{interm}
Mf(x)\le 6^n\sum_{\a=1}^{2^n}M^{{\mathscr{D}}_{\a}}f(x).
\end{equation}
Second, by the Calder\'on-Zygmund decomposition, if
$$\O_k=\{x:M^df(x)>2^{(n+1)k}\}=\cup_{j}{Q_j^k}\quad\text{and}\quad E_j^k=Q_j^k\setminus \O_{k+1},$$
then the family $\{Q_j^k\}$ is sparse and
$$M^df(x)\le 2^{n+1}\sum_{k,j}f_{Q_j^k}\chi_{E_j^k}(x)\le 2^{n+1}{\mathcal A}f(x).$$
From this and from (\ref{interm}),
\begin{equation}\label{pb}
Mf(x)\le 2\cdot 12^n\sum_{\a=1}^{2^n}{\mathcal A}_{{\mathscr{D}}_{\a},{\mathcal S}_{\a}}f(x),
\end{equation}
where ${\mathcal S}_{\a}\in {\mathscr{D}}_{\a}$ depends on $f$.
This implies (\ref{in1}) with $c(n)=2\cdot 24^n$.

We turn now to the proof of (\ref{in2}). Fix a family ${\mathcal S}=\{Q_j^k\}\in{\mathcal D}$.
Applying Proposition \ref{prhp} again, we can decompose the cubes $Q_j^k$ into $2^n$ disjoint families $F_{\a}$ such that for any $Q_j^k\in F_{\a}$ there exists
a cube $Q_{j,\a}^k\in {\mathscr{D}}_{\a}$ such that $2^mQ_j^k\subset Q_{j,\a}^k$ and
$\ell_{Q_{j,\a}^k}\le 6\ell_{2^mQ_j^k}$. Hence,
$${\mathcal T}_{\mathcal S,m}f(x)\le 6^n\sum_{\a=1}^{2^n}\sum_{j,k:Q_j^k\in F_{\a}}f_{Q_{j,\a}^k}\chi_{Q_j^k}(x).$$
Set
$${\mathcal A}_{m,\a}f(x)=\sum_{j,k}f_{Q_{j,\a}^k}\chi_{Q_j^k}(x).$$
We have that (\ref{in2}) will follow from
\begin{equation}\label{in3}
\|{\mathcal A}_{m,\a}f\|_{X}\le c(n)m\sup_{{\mathscr{D}},{\mathcal S}}\|{\mathcal A}_{{\mathscr{D}},{\mathcal S}}f\|_{X}  \quad(f\ge 0).
\end{equation}

Consider the formal adjoint to ${\mathcal A}_{m,\a}$:
$${\mathcal A}_{m,\a}^{\star}f=\sum_{j,k}\Big(\frac{1}{|Q_{j,\a}^k|}\int_{Q_j^k}f\Big)\chi_{Q_{j,\a}^k}(x).$$

\begin{prop}\label{a2b} For any $m\in{\mathbb N}$,
$$\|{\mathcal A}_{m,\a}^{\star}f\|_{L^2}=\|{\mathcal A}_{m,\a}f\|_{L^2}\le 8\|f\|_{L^2}.$$
\end{prop}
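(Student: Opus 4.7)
The first equality in the statement is the standard fact that an operator and its formal $L^2$-adjoint have the same operator norm, so the whole task reduces to verifying that $\mathcal{A}_{m,\alpha}^{\star}$ really is the $L^2$-adjoint of $\mathcal{A}_{m,\alpha}$ and proving $\|\mathcal{A}_{m,\alpha} f\|_{L^2}\le 8\|f\|_{L^2}$. Adjointness is an immediate unwrapping:
$$\langle \mathcal{A}_{m,\alpha} f,g\rangle=\sum_{j,k} f_{Q_{j,\alpha}^k}\int_{Q_j^k}g=\sum_{j,k}\frac{1}{|Q_{j,\alpha}^k|}\int_{Q_{j,\alpha}^k}f\int_{Q_j^k}g=\langle f,\mathcal{A}_{m,\alpha}^{\star}g\rangle,$$
so only the $L^2$ bound for $\mathcal{A}_{m,\alpha}$ itself remains.

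For that bound I would argue by duality, testing against nonnegative $g\in L^2$ (one may assume $f\ge 0$ since $\mathcal{A}_{m,\alpha}$ preserves positivity and $|\mathcal{A}_{m,\alpha} f|\le \mathcal{A}_{m,\alpha}|f|$). The key step is to exploit the sparseness of $\mathcal{S}=\{Q_j^k\}$: letting $E_j^k=Q_j^k\setminus \O_{k+1}$, property (iii) of a sparse family gives $|E_j^k|\ge |Q_j^k|/2$ and the $E_j^k$ are pairwise disjoint. Thus $\int_{Q_j^k}g=|Q_j^k|\,g_{Q_j^k}\le 2\,|E_j^k|\,g_{Q_j^k}$, converting the sum over overlapping cubes into a sum over pairwise disjoint sets at the cost of a factor $2$. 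Since $E_j^k\subset Q_j^k\subset Q_{j,\alpha}^k$ with $Q_j^k\in \mathcal{D}$ and $Q_{j,\alpha}^k\in\mathscr{D}_{\alpha}$, we have the pointwise bounds
$$f_{Q_{j,\alpha}^k}\le M^{\mathscr{D}_{\alpha}}f(x), \qquad g_{Q_j^k}\le M^{\mathcal{D}}g(x) \qquad (x\in E_j^k),$$
where $M^{\mathscr{D}}$ denotes the dyadic maximal operator associated to the grid $\mathscr{D}$. Summing and invoking Cauchy-Schwarz yields
$$\langle \mathcal{A}_{m,\alpha} f,g\rangle\le 2\sum_{j,k}\int_{E_j^k}M^{\mathscr{D}_{\alpha}}f\cdot M^{\mathcal{D}}g\le 2\,\|M^{\mathscr{D}_{\alpha}}f\|_{L^2}\,\|M^{\mathcal{D}}g\|_{L^2}.$$
Doob's $L^2$ inequality $\|M^{\mathscr{D}}h\|_{L^2}\le 2\|h\|_{L^2}$, valid for any dyadic maximal operator, then gives the constant $2\cdot 2\cdot 2=8$.

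There is no real obstacle in this plan: the one small subtlety is that $Q_{j,\alpha}^k$ lies in $\mathscr{D}_{\alpha}$ while $Q_j^k$ lies in $\mathcal{D}$, which is why two different dyadic maximal operators appear, but this causes no loss since both are $L^2$-bounded with constant $2$. Note that the parameter $m$ never enters the estimate---it affects only the geometric location of the cubes $Q_{j,\alpha}^k$, which is invisible to the argument. Consequently the $L^2$ bound is uniform in $m$, and the linear $m$-growth asserted in (\ref{in2}) will have to be extracted later by interpolating this sharp $L^2$ bound against a cruder bound in another function space.
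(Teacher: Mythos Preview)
Your argument is correct and is essentially identical to the paper's proof: both test $\mathcal{A}_{m,\alpha}$ against a nonnegative $g$, use the sparseness sets $E_j^k=Q_j^k\setminus\Omega_{k+1}$ with $|Q_j^k|\le 2|E_j^k|$ to pass to a disjoint sum, bound the averages pointwise by the two dyadic maximal operators $M^{\mathscr{D}_\alpha}$ and $M^d$, and finish with Cauchy--Schwarz and the $L^2$ bound $\|M^{\mathscr{D}}h\|_{L^2}\le 2\|h\|_{L^2}$. Your extra remarks on the explicit adjoint computation and the $m$-independence of the constant are accurate and do not alter the route.
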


\begin{proof} Set $E_j^k=Q_j^k\setminus \Omega_{k+1}$.
Observe that the sets $E_j^k$ are pairwise disjoint and $|Q_j^k|\le 2|E_j^k|$.
From this,
\begin{eqnarray*}
\int_{{\mathbb R}^n}({\mathcal A}_{m,\a}f)gdx=\sum_{k,j}f_{Q_{j,\a}^k}g_{Q_j^k}|Q_j^k|&\le& 2\sum_{k,j}\int_{E_j^k}(M^{{\mathscr{D}}_{\a}}f)(M^dg)dx\\
&\le& 2\int_{{\mathbb R}^n}(M^{{\mathscr{D}}_{\a}}f)(M^dg)dx.
\end{eqnarray*}
From this, using H\"older's inequality, the $L^2$ boundedness of $M^d$ and duality, we get the $L^2$ bound for ${\mathcal A}_{m,\a}$.
\end{proof}

\begin{lemma}\label{weaktype} For any $m\in{\mathbb N}$,
$$\|{\mathcal A}_{m,\a}^{\star}f\|_{L^{1,\infty}}\le c(n)m\|f\|_{L^1}.$$
\end{lemma}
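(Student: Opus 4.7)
The plan is to run the standard Calder\'on--Zygmund decomposition of $f$ (assumed nonnegative) at a fixed level $\la>0$ in the grid $\mathcal D$, obtaining pairwise disjoint maximal cubes $\{P_i\}\subset\mathcal D$ with $\la<(f)_{P_i}\le 2^n\la$ and $\sum_i|P_i|\le\|f\|_{L^1}/\la$. Write $f=g+\sum_i b_i$ in the usual way, where $b_i=(f-(f)_{P_i})\chi_{P_i}$ is mean-zero and supported in $P_i$, and $\|g\|_\infty\le 2^n\la$. For the good part, I would combine Proposition~\ref{a2b} with $\|g\|_{L^2}^2\le\|g\|_\infty\|g\|_{L^1}\le 2^n\la\|f\|_{L^1}$ and Chebyshev's inequality to obtain $|\{x:|{\mathcal A}_{m,\a}^{\star}g(x)|>\la/2\}|\le c(n)\|f\|_{L^1}/\la$, with a constant independent of $m$.

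For the bad part, the decisive cancellation is $\int b_i=0$: together with the nested structure of $\mathcal D$, this forces $\int_{Q_j^k}b_i=0$ whenever $Q_j^k\supseteq P_i$ or $Q_j^k\cap P_i=\emptyset$, so only the pairs with $Q_j^k\subsetneq P_i$ contribute to ${\mathcal A}_{m,\a}^{\star}b_i$. Each such $Q_{j,\a}^k$ meets $P_i$ and has sidelength at most $3\cdot 2^m\ell_{P_i}$, so it is contained in $c_n 2^m P_i$. Setting $\O:=\bigcup_i 3P_i$, so that $|\O|\le 3^n\|f\|_{L^1}/\la$, it remains to show
\begin{equation*}
\bigl|\{x\notin\O:|{\mathcal A}_{m,\a}^{\star}b(x)|>\la/2\}\bigr|\le\frac{c(n)m\|f\|_{L^1}}{\la}.
\end{equation*}

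The main obstacle is extracting linear, rather than exponential, dependence on $m$: a bound based only on the support of ${\mathcal A}_{m,\a}^{\star}b_i$ loses a factor of $2^{nm}$ coming from $|c_n 2^m P_i|$. To avoid this, I would observe that for any $x\notin\O$ and any pair $(j,k)$ contributing at $x$ with $Q_j^k\subsetneq P_i$, the cube $R:=Q_{j,\a}^k$ contains $x\notin 3P_i$ while meeting $P_i$, forcing $\ell_R\ge\ell_{P_i}/\sqrt n$; combined with $\ell_R\le 3\cdot 2^m\ell_{P_i}$, only $O(m)$ dyadic generations of $\mathscr{D}_\a$-cubes $R\ni x$ can appear, with a single such $R$ per generation. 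Grouping the sum defining ${\mathcal A}_{m,\a}^{\star}b_i(x)$ by $R$ and exploiting, for each fixed $R$, the sparseness of $\{Q_j^k\}$ together with the mean-zero property $\int b_i=0$, the contribution of one generation can be bounded by $c_n\la|P_i\cap R|/|R|$; summing over the $O(m)$ generations relevant for a fixed $i$, then over $i$ (using the disjointness of $\{P_i\}$ to control the multiplicity at each scale), and finally applying Chebyshev, should yield the desired $c(n)m\|f\|_{L^1}/\la$ estimate. This delicate scale-by-scale accounting, which converts the raw $2^{nm}$ support loss into a single factor of $m$, is the genuinely nontrivial step of the proof.
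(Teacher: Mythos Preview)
Your approach is essentially the paper's: Calder\'on--Zygmund decomposition, the $L^2$ bound (Proposition~\ref{a2b}) for the good part, and for the bad part the reduction via $\int b_i=0$ and dyadic nestedness to pairs $Q_j^k\subsetneq P_i$, together with the observation that $Q_{j,\a}^k\cap\O^c\ne\emptyset$ confines the relevant scales to an interval of length $O(m)$. The paper uses a Whitney decomposition of $\{Mf>\la\}$ rather than your dilated union $\bigcup_i 3P_i$, and it counts the $O(m)$ scales of $Q_j^k\subset Q_l$ directly (obtaining $\sum\chi_{Q_j^k}\le(m+4)\chi_{Q_l}$, hence $\sum_{j,k}\int_{Q_j^k}|b_l|\le(m+4)\int_{Q_l}|b_l|$) instead of grouping by $R=Q_{j,\a}^k$; both variants collapse to the same $L^1(\O^c)$ estimate after Chebyshev.

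One correction: the per-$R$ bound $c_n\la|P_i\cap R|/|R|$ is not right as written, since $|b_i|$ is not pointwise bounded by $c_n\la$ (only $\int_{P_i}|b_i|\le c_n\la|P_i|$ holds). The clean bound for the contribution of a fixed $R$ is simply $\frac{c_n}{|R|}\int_{P_i\cap R}|b_i|$, which needs neither sparseness nor further use of the mean-zero property; after integrating over $x\in R\cap\O^c$, summing over the $O(m)$ scales of $R$ (with $O_n(1)$ cubes $R$ per scale meeting $P_i$, since $\ell_R\gtrsim\ell_{P_i}$), and then over $i$, you recover $c(n)m\|f\|_{L^1}/\la$. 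With this fix the argument is complete and coincides with the paper's.
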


\begin{proof} Set $\Omega=\{x:Mf(x)>\a\}$ and let $\Omega=\cup_lQ_l$ be a Whitney decomposition such that $3Q_l\subset \Omega$,
where $Q_l\in {\mathcal D}$ (see, e.g., \cite[p. 348]{BS}).
Set also
$$b_l=(f-f_{Q_l})\chi_{Q_l},\quad b=\sum_lb_l$$
and $g=f-b$. We have
\begin{eqnarray}
|\{x:|{\mathcal A}_{m,\a}^{\star}f(x)|>\a\}|&\le& |\Omega|+|\{x:|{\mathcal A}_{m,\a}^{\star}g(x)|>\a/2\}|\nonumber\\
&+&|\{x\in \Omega^c:|{\mathcal A}_{m,\a}^{\star}b(x)|>\a/2\}|.\label{set}
\end{eqnarray}
Further, $|\Omega|\le \frac{c(n)}{\a}\|f\|_{L_1}$, and, by the $L^2$ boundedness of ${\mathcal A}_{m,\a}^{\star}$,
$$|\{x:|{\mathcal A}_{m,\a}^{\star}g(x)|>\a/2\}|\le \frac{4}{\a^2}\|{\mathcal A}_{m,\a}^{\star}g\|_{L^2}^2\le \frac{c}{\a^2}\|g\|_{L^2}^2\le \frac{c}{\a}\|g\|_{L^1}\le \frac{c}{\a}\|f\|_{L^1}$$
(we have used here that $g\le c\a$).

It remains therefore to estimate the term in (\ref{set}). For $x\in \Omega^c$ consider
$${\mathcal A}_{m,\a}^{\star}b(x)=\sum_l\sum_{k,j}\Big(\frac{1}{|Q_{j,\a}^k|}\int_{Q_j^k}b_l\Big)\chi_{Q_{j,\a}^k}(x).$$
The second sum is taken over those cubes $Q_j^k$ for which $Q_j^k\cap Q_l\not=\emptyset$. If $Q_l\subseteq Q_j^k$, then
$(b_l)_{Q_j^k}=0$. Therefore one can assume that $Q_j^k\subset Q_l$. On the other hand, $Q_{j,\a}^k\cap \Omega^c\not=\emptyset$.
Since $3Q_l\subset \Omega$, we have that $Q_l\subset 3Q_{j,\a}^{k}$. Hence
$$\ell_{Q_l}\le 3\ell_{Q_{j,\a}^k}\le 18\cdot 2^m\ell_{Q_j^k}.$$
The family of all dyadic cubes $Q$ for which $Q\subset Q_l$ and $\ell_{Q_l}\le 18\cdot 2^m\ell_{Q}$
can be decomposed into
$m+4$ families of disjoint cubes of equal length. Therefore,
$$\sum_{k,j:Q_j^k\subset Q_l\subset 3Q_{j,\a}^{k}}
\chi_{Q_j^k}\le (m+4)\chi_{Q_l}.$$
From this we get
\begin{eqnarray*}
&&|\{x\in \Omega^c:|{\mathcal A}_{m,\a}^{\star}b(x)|>\a/2\}|\le \frac{2}{\a}\|{\mathcal A}_{m,\a}^{\star}b\|_{L^1(\Omega^c)}\\
&&\le \frac{2}{\a}\sum_l\sum_{k,j:Q_j^k\subset Q_l\subset 3Q_{j,\a}^{k}}\int_{Q_j^k}|b_l|dx\le \frac{2(m+4)}{\a}\sum_l
\int_{Q_l}|b_l|dx\\
&&\le\frac{4(m+4)}{\a}\|f\|_{L^1}.
\end{eqnarray*}
The proof is complete.
\end{proof}

\begin{lemma}\label{locosc} For any cube $Q\in {\mathscr{D}}_{\a}$,
$$\o_{\la_n}({\mathcal A}_{m,\a}^{\star}f;Q)\le c(n)mf_Q\quad(\la_n=1/2^{n+2}).$$
\end{lemma}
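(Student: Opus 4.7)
The plan is to exploit the fact that $Q\in{\mathscr{D}}_\a$ and every cube $Q_{j,\a}^k$ appearing in ${\mathcal A}_{m,\a}^{\star}f$ lies in the same dyadic grid ${\mathscr{D}}_\a$, so any two such cubes are nested or disjoint. For $x\in Q$ this lets me decompose
\[
{\mathcal A}_{m,\a}^{\star}f(x)=c_Q+h(x),
\]
where $c_Q$ collects the contributions from cubes $Q_{j,\a}^k\supsetneq Q$ (on $Q$ their indicators equal $1$, so this is a single additive constant) and $h$ collects the contributions from cubes $Q_{j,\a}^k\subseteq Q$; cubes $Q_{j,\a}^k$ disjoint from $Q$ contribute nothing on $Q$. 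Choosing $c=c_Q$ in the infimum defining $\o_{\la_n}({\mathcal A}_{m,\a}^{\star}f;Q)$, the lemma reduces to showing $h^*(\la_n|Q|)\le c(n)mf_Q$.

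For the tail, the key observation is that if $Q_{j,\a}^k\subseteq Q$, then by the construction of $Q_{j,\a}^k$ we have $Q_j^k\subseteq 2^mQ_j^k\subseteq Q_{j,\a}^k\subseteq Q$; in particular $\int_{Q_j^k}f=\int_{Q_j^k}(f\chi_Q)$. Since I may assume $f\ge 0$ throughout the proof of (\ref{in3}), this gives the pointwise domination
\[
h(x)\le {\mathcal A}_{m,\a}^{\star}(f\chi_Q)(x).
\]
Applying Lemma \ref{weaktype} to $f\chi_Q$ yields
\[
\|{\mathcal A}_{m,\a}^{\star}(f\chi_Q)\|_{L^{1,\infty}}\le c(n)m\|f\chi_Q\|_{L^1}=c(n)m|Q|f_Q,
\]
and the standard inequality $g^*(t)\le \|g\|_{L^{1,\infty}}/t$ with $t=\la_n|Q|$ (where $\la_n=1/2^{n+2}$ is a dimensional constant) gives $h^*(\la_n|Q|)\le c(n)mf_Q$, which is exactly what is needed.

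The main thing to watch is the bookkeeping in the splitting: one must verify that every $Q_{j,\a}^k$ is either disjoint from $Q$, contained in $Q$, or strictly contains $Q$, and that in the last case $\chi_{Q_{j,\a}^k}$ is identically $1$ on $Q$ so that those contributions genuinely combine into a constant independent of $x\in Q$. Both facts are immediate from the dyadic nesting within ${\mathscr{D}}_\a$, so there is no real obstacle; after this, the argument is a direct invocation of the weak-type bound from Lemma \ref{weaktype} together with the trivial passage from $L^{1,\infty}$ to a pointwise rearrangement bound.
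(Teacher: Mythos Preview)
Your proof is correct and follows essentially the same approach as the paper: split the sum defining ${\mathcal A}_{m,\a}^{\star}f$ on $Q$ into a constant part coming from the cubes $Q_{j,\a}^k\supseteq Q$ and a tail dominated by ${\mathcal A}_{m,\a}^{\star}(f\chi_Q)$, then apply the weak-type bound of Lemma~\ref{weaktype}. The only cosmetic difference is that the paper places the case $Q_{j,\a}^k=Q$ into the constant rather than into $h$, which is immaterial.
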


\begin{proof} For $x\in Q$,
$$\sum_{k,j:Q\subseteq Q_{j,\a}^k}\Big(\frac{1}{|Q_{j,\a}^k|}\int_{Q_j^k}f\Big)\chi_{Q_{j,\a}^k}(x)=\sum_{k,j:Q\subseteq Q_{j,\a}^k}\Big(\frac{1}{|Q_{j,\a}^k|}\int_{Q_j^k}f\Big)\equiv c.$$
Hence
$$|{\mathcal A}_{m,\a}^{\star}f(x)-c|\chi_Q(x)=
\sum_{k,j:Q_{j,\a}^k\subset Q}\Big(\frac{1}{|Q_{j,\a}^k|}\int_{Q_j^k}f\Big)\chi_{Q_{j,\a}^k}(x)
\le {\mathcal A}_{m,\a}^{\star}(f\chi_Q)(x).$$
From this and from Lemma \ref{weaktype},
$$\inf_c(({\mathcal A}_{m,\a}^{\star}f-c)\chi_Q)^*(\la_n|Q|)\le ({\mathcal A}_{m,\a}^{\star}(f\chi_Q))^*(\la_n|Q|)\le c(n)mf_Q,$$
which completes the proof.
\end{proof}

We are ready now to prove (\ref{in3}). One can assume that the sum defining ${\mathcal A}_{m,\a}$ is finite.
Then $m_{{\mathcal A}_{m,\a}^{\star}f}(Q)=0$ for $Q$ big enough. Hence, By Lemma \ref{locosc} and Theorem \ref{decom1}, for a.e. $x\in~Q$
(where $Q\in {\mathscr{D}}_{\a}$),
$${\mathcal A}_{m,\a}^{\star}f(x)\le c(n)m\big(Mf(x)+{\mathcal A}_{{\mathcal S}_{\a},{\mathscr D}_{\a}}f(x)\big).$$
From this and from (\ref{pb}), for any $g\ge 0$ we have
\begin{eqnarray*}
&&\int_{{\mathbb R}^n}({\mathcal A}_{m,\a}f)gdx=\int_{{\mathbb R}^n}f({\mathcal A}_{m,\a}^{\star}g)dx\\
&&\le c_n m\sum_{\a=1}^{2^n+1}\int_{{\mathbb R}^n}f({\mathcal A}_{{\mathscr{D}}_{\a},{\mathcal S}_{\a}}g)dx\\
&&=c_n m\sum_{\a=1}^{2^n+1}\int_{{\mathbb R}^n}({\mathcal A}_{{\mathscr{D}}_{\a},{\mathcal S}_{\a}}f)gdx\le c'_n m\sup_{{\mathscr{D}},{\mathcal S}}
\|{\mathcal A}_{{\mathscr{D}},{\mathcal S}}f\|_X\|g\|_{X'}.
\end{eqnarray*}
Taking here the supremum over $g$ with $\|g\|_{X'}=1$ completes the proof.

\vskip 0.5cm
\noindent
{\bf Added in proof}.
We have just learned that T. Hyt\"onen, M. Lacey and C. P\'erez \cite{HLP} have also found
a proof of the $A_2$ conjecture avoiding a representation of $T$ in terms of Haar shifts.
The first step in this proof is the same: the ``local mean oscillation decomposition" combined
with Proposition \ref{oscsin} which reduces the problem to operators ${\mathcal A}_{m,\a}$.
In order to handle ${\mathcal A}_{m,\a}$, the authors use the result from \cite{HL} where it was
observed that this operator can be viewed as a positive Haar shift operator of complexity $m$.
As we have mentioned previously, our proof avoids completely the notion of the Haar shift operator, and
to bound ${\mathcal A}_{m,\a}$ we apply the decomposition again (as it is shown starting with Lemma \ref{weaktype}).

\end{document}